\numberwithin{equation}{section}
\newtheorem{theorem}{Theorem}
\newtheorem{proposition}{Proposition}
\newtheorem{remark}{Remark}
\numberwithin{theorem}{section}
\numberwithin{corollary}{section}
\numberwithin{lemma}{section}
\numberwithin{definition}{section}
\numberwithin{proposition}{section}
\numberwithin{remark}{section}
\newcommand{\dint}{\ds\int}
\newcommand{\ds}{\displaystyle}
\newcommand{\R}{\mathbb R}
\newcommand{\medint}{-\kern  -,375cm\int}
\begin{document}
\title[ ]{A sharp lower bound for some Neumann eigenvalues of the Hermite operator }
\author[B. Brandolini, F. Chiacchio, C. Trombetti]{B. Brandolini - F.
Chiacchio - C. Trombetti$^*$}
\thanks{$^*$ Dipartimento di Matematica e Applicazioni ``R. Caccioppoli'',
Universit\`{a} degli Studi di Napoli ``Federico II'', Complesso Monte S.
Angelo, via Cintia - 80126 Napoli, Italy; email: brandolini@unina.it;
francesco.chiacchio@unina.it; cristina@unina.it}
\maketitle

\begin{abstract}
This paper deals with the Neumann eigenvalue problem for the Hermite operator defined in a convex, possibly unbounded, planar domain $\Omega$, having one axis of symmetry passing through the origin. We prove a sharp lower bound for the first eigenvalue $\mu_1^{odd}(\Omega)$ with an associated eigenfunction odd with respect to the axis of symmetry. Such an estimate involves the first eigenvalue of the corresponding one-dimensional problem.  As an immediate consequence, in the class of domains for which $\mu_1(\Omega)=\mu_1^{odd}(\Omega)$, we get  an explicit lower bound for the difference between $\mu(\Omega)$ and the first Neumann eigenvalue of any strip.
\end{abstract}

\section{Introduction}

In this paper we study the following Neumann eigenvalue problem
\begin{equation}
\left\{
\begin{array}{ll}
-\mathrm{div}\left( \exp \left( -\frac{x^{2}+y^{2}}{2}\right) \nabla
u\right) =\mu \exp \left( -\frac{x^{2}+y^{2}}{2}\right) u & \mbox{in}\>\Omega
\\
&  \\
\dfrac{\partial u}{\partial \nu }=0 & \mbox{on}\>\partial \Omega,
\end{array}
\right.  \label{problem}
\end{equation}
where $\Omega $ is a possibly unbounded, smooth domain in $\mathbb{R}
^{2} $ and $\nu$ is the outward normal to
$\partial \Omega $. As in the case of the Neumann Laplacian it is easily seen that the
lowest eigenvalue of problem \eqref{problem} is zero, the eigenfunction being any constant. Eigenfunctions $u$ corresponding to higher
eigenvalues must satisfy the orthogonality condition
\begin{equation*}
\int_{\Omega }u\,d\gamma _{2}=0,
\end{equation*}
where $d\gamma _{2}$ stands for the standard normal Gaussian measure, that is
\begin{equation*}
d\gamma _{2}=d\gamma_x \otimes d\gamma_y=\frac{1}{\sqrt{2\pi} }\exp \left( -\frac{x^{2}}{2}\right) dx \otimes \frac{1}{\sqrt{2\pi} }\exp \left( -\frac{y^{2}}{2}\right) dy.
\end{equation*}
Clearly the equation in \eqref{problem} can be rewritten as follows
\begin{equation*}
-\Delta u+x\cdot \nabla u=\mu u,
\end{equation*}
where at the left hand side the classical Hermite operator appears.
When $\Omega =\mathbb{R}^{2}$, all the eigenvalues of \eqref{problem} are
known and corresponding eigenfunctions are the Hermite polynomials (see,
e.g., \cite{CH}). When $\Omega \subsetneqq \mathbb{R}^{2},$ much more less is
known about the spectral properties of this operator.

In the case of  Dirichlet homogeneous boundary condition a Faber-Krahn type inequality
has been established by Ehrhard in 1986. In \cite{E} (see also \cite{BCF})  he actually proved that, among all domains in $\R^N$ having prescribed Gaussian measure, the half-space achieves the smallest eigenvalue. A sharp inequality concerning the ratio between the first two eigenvalues (the so-called PPW estimate) is contained in \cite{BL}.

When a Neumann homogeneous boundary condition is
prescribed, in \cite{CdB} a Szeg\"{o}-Weinberger type inequality is derived;
more precisely the authors proved that
\begin{equation*}
\mu _{1}(\Omega )\leq \mu _{1}(\Omega ^{\sharp }),
\end{equation*}
where $\Omega $ is a smooth domain of $\mathbb{R}^{N},$ symmetric with
respect to the origin and possibly unbounded, and $\Omega ^{\sharp }$ is the
ball, centered at the origin, with the same Gaussian measure as $\Omega $.

In this paper we consider the class of planar convex sets having an axis of symmetry
through the origin, say for instance the $y$-axis, and we denote by $\mu _{1}^{odd}(\Omega )$ the lowest Neumann eigenvalue  with
a corrisponding eigenfunction odd with respect  to the axis of symmetry. We prove a lower bound for $\mu _{1}^{odd}(\Omega )$ in the same spirit of the  celebrated Payne-Weinberger estimate for the
first nontrivial eigenvalue $\mu _{1}^{\Delta }(\Omega )$\ of the Laplacian.
In \cite{PW} (see also \cite{Be}) the authors proved that if $\Omega $
is a bounded convex domain in $\mathbb{R}^{N}$ with diameter $d$, then
\begin{equation}
\mu _{1}^{\Delta }(\Omega )\geq \frac{\pi ^{2}}{d^{2}}.  \label{PW_est}
\end{equation}
This result is asymptotically sharp, since $ \pi ^{2}/d^{2} $ is the
first nontrivial Neumann eigenvalue of the  one-dimensional Laplacian in $
(-d/2,d/2).$ Instead of $ \pi^2/d^2 $ our estimate for $\mu
_{1}^{odd}(\Omega )$ involves the first nontrivial
eigenvalue $\mu _{1}(a,b)$ of the following one-dimensional problem
\begin{equation*}
\left\{
\begin{array}{ll}
-v^{\prime \prime }+xv^{\prime }=\mu v & \mbox{in}\>(a,b) \\
&  \\
v^{\prime }(a)=v^{\prime }(b)=0, &
\end{array}
\right.
\end{equation*}
where $-\infty \leq a<b\leq \infty $ \ are related to the geometry of $
\Omega .$ As well-known the following variational characterization holds
\begin{equation}
\mu _{1}(a,b)=\min_{\int_{a}^{b}z\,d\gamma _{x}=0}\frac{\displaystyle
\int_{a}^{b}(z^{\prime })^{2}d\gamma _{x}}{\displaystyle\int_{a}^{b}z^{2}d
\gamma _{x}}.  \label{min}
\end{equation}

Our first main result is the following.

\begin{theorem}
\label{T1} Let $\Omega$ be a convex, bounded domain in $\R^2$, symmetric with respect to the $y$-axis. Let $a>0$ and let $p,-q:(-a,a)\rightarrow \mathbb{R}$ be concave,
even functions such that
\begin{equation*}
\Omega =\{(x,y)\in \mathbb{R}^{2}:\>-a<x<a,\>q(x)<y<p(x)\};
\end{equation*}
 then
\begin{equation}
\mu _{1}^{odd}(\Omega )\geq \mu _{1}(-a,a),  \label{estimate}
\end{equation}
where $\mu _{1}(-a,a)$ is defined in \eqref{min}. Equality sign holds in
\eqref{estimate} for every rectangle $(-a,a)\times (-b,b)$  with $b>0$.
\end{theorem}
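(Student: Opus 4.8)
The plan is to work throughout with the variational characterization of $\mu_1^{odd}(\Omega)$. Since the reflection $x\mapsto -x$ leaves both $\Omega$ and the Gaussian weight invariant, the subspace of functions odd in $x$ is invariant under the operator, and $\mu_1^{odd}(\Omega)$ equals the minimum of the Rayleigh quotient $\int_\Omega|\nabla u|^2\,d\gamma_2 / \int_\Omega u^2\,d\gamma_2$ over all $u\in H^1(\Omega,d\gamma_2)$ with $u(-x,y)=-u(x,y)$; note that any such $u$ automatically satisfies the orthogonality condition $\int_\Omega u\,d\gamma_2=0$. Discarding the nonnegative term $\int_\Omega u_y^2\,d\gamma_2$, it then suffices to prove that
\[
\int_\Omega u_x^2\,d\gamma_2 \ \geq\ \mu_1(-a,a)\int_\Omega u^2\,d\gamma_2
\]
for every admissible $u$.

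The second step is to slice horizontally. Because $p$ is concave and $q$ is convex, $\Omega$ is convex, so every horizontal slice $\Omega_y=\{x:(x,y)\in\Omega\}$ is an interval; since $p,q$ are even it is symmetric, $\Omega_y=(-a(y),a(y))$, and $a(y)\leq a$ because $\Omega\subset(-a,a)\times\mathbb R$. For a.e.\ $y$ the function $x\mapsto u(x,y)$ lies in $H^1((-a(y),a(y)),d\gamma_x)$ and, being odd, has zero $\gamma_x$-mean there. Hence the one-dimensional characterization \eqref{min} on $(-a(y),a(y))$ gives
\[
\int_{-a(y)}^{a(y)} u_x^2\,d\gamma_x \ \geq\ \mu_1(-a(y),a(y))\int_{-a(y)}^{a(y)} u^2\,d\gamma_x
\]
for a.e.\ $y$. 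Integrating in $d\gamma_y$ and applying Fubini reduces everything to the monotonicity claim $\mu_1(-c,c)\geq\mu_1(-a,a)$ for $0<c\leq a$.

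Establishing this monotonicity is the heart of the argument, and I expect it to be the main obstacle. First I would record, via Sturm--Liouville oscillation theory for the weight $e^{-x^2/2}$, that the first nontrivial eigenfunction on $(-c,c)$ is odd: the $k$-th eigenfunction has exactly $k$ interior zeros, and an even eigenfunction cannot vanish at the origin (otherwise it would vanish identically), so its zeros occur in pairs $\pm x_0$ and are even in number; the eigenfunction with a single zero must therefore be odd. Consequently $\mu_1(-c,c)$ is the minimum of the Rayleigh quotient over odd functions, equivalently the first eigenvalue of the mixed problem on $(0,c)$ with a Dirichlet condition at $0$ and a Neumann condition at $c$. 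Given the optimal $z$ for this mixed problem on $(0,c)$, extending it to $(0,a)$ by the constant $z(c)$ yields an admissible competitor vanishing at $0$; the extension leaves the numerator unchanged and only enlarges the denominator, whence $\mu_1(-a,a)\leq\mu_1(-c,c)$. Combined with the slicing inequality this gives $\int_\Omega u_x^2\,d\gamma_2\geq\mu_1(-a,a)\int_\Omega u^2\,d\gamma_2$, hence \eqref{estimate}.

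Finally, for a rectangle $(-a,a)\times(-b,b)$ the problem separates by variables: taking $v_1(x)$, where $v_1$ is the (odd) first eigenfunction of the one-dimensional problem on $(-a,a)$, extended constantly in $y$, produces a function odd in $x$ with eigenvalue exactly $\mu_1(-a,a)$, so $\mu_1^{odd}=\mu_1(-a,a)$. This is consistent with equality being forced throughout the chain above, since for a rectangle one has $a(y)\equiv a$, the discarded term $\int_\Omega u_y^2\,d\gamma_2$ vanishes, and $u(\cdot,y)$ is exactly the one-dimensional minimizer on each slice. The slicing and the Fubini-type manipulations are routine once the geometry of the horizontal slices is understood; the delicate point is the monotonicity lemma together with the identification of $\mu_1(-a,a)$ with the odd part of the one-dimensional spectrum.
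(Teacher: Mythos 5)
Your proof is correct, but it takes a genuinely different and considerably more elementary route than the paper's. The paper runs a Payne--Weinberger-type scheme: it regularizes $p,q$ to $C^2$ functions with $c_0\le |p'|\le c_1$, bisects $\Omega^+=\Omega\cap\{y>p(a)\}$ by horizontal lines into $2^n$ thin convex strips of equal Gaussian measure (oddness of $u$ giving zero mean on each strip), replaces the two-dimensional Rayleigh quotient on each thin strip by a weighted one-dimensional Sturm--Liouville problem with weight $\phi_k(x)=\int_{d_k}^{p_k(x)}d\gamma_y$, proves $\bar\lambda_k\ge\mu_1(-a,a)$ via the substitution $w_k=v_k'\phi_k^{1/2}$ and a sign analysis of the resulting potential, and finally controls the strip-versus-edge discrepancy using a uniform bound $M$ on $u$ and its first two derivatives before letting the strip thickness $\epsilon\to0$. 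You instead observe that oddness of $u$ in $x$, together with the symmetry of every horizontal slice $(-a(y),a(y))$, gives the zero-$\gamma_x$-mean condition on each individual slice, so \eqref{min} applies exactly, with no weight, no error terms, and no need for derivative bounds on $u$ or for the regularization of $p,q$; all that remains is the monotonicity $\mu_1(-c,c)\ge\mu_1(-a,a)$ for $c\le a$, which the paper also invokes (its \eqref{monotone}, proved via $\mu_1=\lambda_1+1$ and Dirichlet domain monotonicity) and which your oscillation-theory/mixed-problem argument establishes correctly --- indeed you could even skip the oscillation step, since each slice restriction of $u$ vanishes at $x=0$ and is therefore directly controlled by the mixed Dirichlet--Neumann eigenvalue of $(0,a(y))$, hence of $(0,a)$, hence by $\mu_1(-a,a)$. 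What your approach buys is brevity and, more importantly, independence from boundedness of the eigenfunction and its derivatives, which is precisely the obstruction the paper cites as forcing the separate extension-operator machinery for unbounded domains in Theorem \ref{T2}: your slicing inequality holds verbatim for any odd $H^1$ competitor, so it bounds the odd Rayleigh quotient from below even when $\Omega$ is unbounded, leaving only the existence of a minimizer to be settled there. What the paper's heavier machinery buys is a weighted one-dimensional lemma that does not rely on slice-by-slice zero means, which is the tool one would need to attack $\mu_1(\Omega)$ itself rather than $\mu_1^{odd}(\Omega)$.
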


The case of unbounded sets is addressed in the next theorem. We will implicitly suppose that $\Omega$ is not a vertical strip, being this case trivial.
\begin{theorem}
\label{T2} Let $\Omega$ be a $C^2$, convex, unbounded domain in $\R^2$, symmetric with respect to the $y$-axis. Let $a \in (0,+\infty]$ and let $p:(-a,a)\rightarrow \mathbb{R}$ be a concave, even, $C^2$ function
such that
\begin{equation*}
\Omega =\{(x,y)\in \mathbb{R}^{2}:\>-a<x<a,\>y<p(x)\} .
\end{equation*}
Assume that $\Omega$ satisfies a uniform interior sphere condition. Then \eqref{estimate} holds true. Equality sign holds in \eqref{estimate} for every domain of the type $(-a,a)\times (-\infty,b)$ with $b\in \R$.
\end{theorem}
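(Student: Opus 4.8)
The plan is to argue directly from the variational characterization, reducing the two-dimensional estimate to the one-dimensional problem \eqref{min} by slicing $\Omega$ with horizontal lines. Since $\Omega$ is symmetric in $x$ and a function $u$ associated with $\mu_1^{odd}(\Omega)$ is odd in $x$, the orthogonality $\int_\Omega u\,d\gamma_2=0$ holds automatically (for each fixed $y$ the integrand is odd on a symmetric slice), so
\[
\mu_1^{odd}(\Omega)=\min\left\{\frac{\int_\Omega |\nabla u|^2\,d\gamma_2}{\int_\Omega u^2\,d\gamma_2}:\ u\not\equiv 0,\ u(-x,y)=-u(x,y)\right\}.
\]
Because $|\nabla u|^2\ge u_x^2$, it suffices to establish the weighted Poincar\'e inequality $\int_\Omega u_x^2\,d\gamma_2\ge \mu_1(-a,a)\int_\Omega u^2\,d\gamma_2$ for every $u\in H^1(\Omega,d\gamma_2)$ odd in $x$; equality in \eqref{estimate} will then force $u_y\equiv 0$.

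Next I would carry out the geometric reduction. Since $p$ is even and concave, for each admissible height $y$ the horizontal slice $\Omega_y=\{x:(x,y)\in\Omega\}$ is a symmetric interval $(-\alpha(y),\alpha(y))$ with $\alpha(y)\le a$. Writing $d\gamma_2=d\gamma_x\otimes d\gamma_y$ and using Tonelli (the integrands are nonnegative, which handles the unboundedness),
\[
\int_\Omega u_x^2\,d\gamma_2=\int\left(\int_{-\alpha(y)}^{\alpha(y)} u_x^2\,d\gamma_x\right)d\gamma_y.
\]
For fixed $y$ the function $x\mapsto u(x,y)$ is odd on $(-\alpha(y),\alpha(y))$, hence $\gamma_x$-mean zero and thus admissible in \eqref{min} on that interval. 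Setting
\[
\lambda(\alpha):=\min\left\{\frac{\int_{-\alpha}^{\alpha}(z')^2\,d\gamma_x}{\int_{-\alpha}^{\alpha} z^2\,d\gamma_x}:\ z\ \text{odd},\ z\not\equiv0\right\},
\]
we obtain $\int_{-\alpha(y)}^{\alpha(y)} u_x^2\,d\gamma_x\ge \lambda(\alpha(y))\int_{-\alpha(y)}^{\alpha(y)} u^2\,d\gamma_x$, and restricting \eqref{min} to the odd subclass gives $\lambda(a)\ge\mu_1(-a,a)$.

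The key point, and the main obstacle, is the monotonicity $\lambda(\alpha)\ge\lambda(a)$ for $\alpha\le a$. I would prove that $\alpha\mapsto\lambda(\alpha)$ is nonincreasing by a test-function argument: let $v$ be the odd minimizer on $(-\alpha_1,\alpha_1)$, i.e. the first eigenfunction of the mixed problem $-v''+xv'=\lambda v$, $v(0)=0$, $v'(\alpha_1)=0$, which is positive on $(0,\alpha_1]$, so that $v(\alpha_1)\neq 0$. Extend it to $(-\alpha_2,\alpha_2)$, with $\alpha_2>\alpha_1$, by the odd constant extension $\tilde v(x)=v(\alpha_1)\,\mathrm{sgn}(x)$ on $\alpha_1<|x|<\alpha_2$. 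Then $\tilde v$ is odd and continuous, its Dirichlet energy is unchanged, $\int_{-\alpha_2}^{\alpha_2}(\tilde v')^2\,d\gamma_x=\int_{-\alpha_1}^{\alpha_1}(v')^2\,d\gamma_x$, while the denominator strictly increases because $v(\alpha_1)\neq0$; denoting by $R$ the Rayleigh quotient defining $\lambda$, this yields $\lambda(\alpha_2)\le R(\tilde v)<R(v)=\lambda(\alpha_1)$. Consequently $\lambda(\alpha(y))\ge\lambda(a)\ge\mu_1(-a,a)$, and integrating the slicewise inequality in $y$ gives the desired Poincar\'e inequality, hence \eqref{estimate}. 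The same argument covers $a=+\infty$, where $\lambda(a)=\mu_1(\R)=1$ arises as the limit of $\lambda(\alpha)$.

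Finally, for the equality case I would take $\Omega=(-a,a)\times(-\infty,b)$, where $\alpha(y)\equiv a$. Using the odd first eigenfunction $v_1$ of \eqref{min} on $(-a,a)$ as the test function $u(x,y)=v_1(x)$, independent of $y$, the Gaussian factor in $y$ cancels and the two-dimensional Rayleigh quotient equals $\mu_1(-a,a)$, giving $\mu_1^{odd}(\Omega)\le\mu_1(-a,a)$ and hence equality. This step uses that the first nontrivial eigenfunction on $(-a,a)$ is odd, i.e. $\lambda(a)=\mu_1(-a,a)$, which I would obtain by comparing the lowest eigenvalue of the mixed (odd) problem with the first nonzero eigenvalue of the Neumann (even) problem on $(0,a)$. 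I expect the genuinely delicate points to be this Sturm--Liouville comparison and, in the unbounded setting, checking that $\mu_1^{odd}(\Omega)$ is attained and that the $C^2$ and uniform interior sphere hypotheses furnish enough regularity and decay of $u$ for the slicing and the integrations above to be legitimate.
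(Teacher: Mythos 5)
Your argument is correct, and it takes a genuinely different --- and considerably more elementary --- route than the paper. The paper proves Theorem \ref{T2} by exhausting $\Omega$ with bounded convex sets $\Omega_n$ (rounded off at the bottom via the interior sphere condition), applying Theorem \ref{T1} (itself a Payne--Weinberger-type thin-strip decomposition) to each, and passing to the limit; the technical weight falls on constructing a uniformly bounded family of extension operators $H^1(\Omega_n,d\gamma_2)\to H^1(\R^2,d\gamma_2)$ by normal reflection, which yields the compactness needed to identify the limit of the normalized eigenfunctions as an odd eigenfunction of $\Omega$. You instead exploit the oddness directly: on each horizontal slice $(-\alpha(y),\alpha(y))$ the trace $u(\cdot,y)$ is odd, hence admissible for the one-dimensional problem with no mean to subtract --- which is exactly the obstruction that forces the strip decomposition in the general Payne--Weinberger setting --- and your key lemma, the monotonicity of the odd (mixed Dirichlet--Neumann) eigenvalue $\lambda(\alpha)$ under interval inclusion proved by constant extension of the minimizer on the Neumann side, closes the argument. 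This buys a lot: you bypass Theorem \ref{T1}, the approximation, and the extension machinery; your lower bound uses only that the horizontal slices are symmetric intervals of half-width at most $a$, so the concavity of $p$, the $C^2$ regularity and the interior sphere condition play no role in it; and the same proof covers the bounded case. What the paper's route buys, and yours does not, is the existence theory: the uniform extension operators give compactness of $H^1(\Omega,d\gamma_2)\hookrightarrow L^2(\Omega,d\gamma_2)$ and hence that $\mu_1^{odd}(\Omega)$ is actually attained, which your argument (and, arguably, the statement itself) presupposes. Two small points to tighten: for the lower bound you should not invoke the variational identity $\mu_1^{odd}(\Omega)=\min R$ over odd functions (which requires attainment); it suffices that the odd eigenfunction $u$ realizing $\mu_1^{odd}(\Omega)$ satisfies $\mu_1^{odd}(\Omega)=R(u)$, obtained by testing the weak equation with $u$ itself, after which the slicing applies to that $u$. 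For the equality case it is cleaner to note that the odd minimizer $v_1(x)$ of \eqref{min} is an exact Neumann eigenfunction of $(-a,a)\times(-\infty,b)$, not merely a test function; the oddness of $v_1$ does require the Sturm--Liouville observation you flag (simplicity plus the fact that an even eigenfunction with one interior zero would satisfy $v(0)=v'(0)=0$), a point the paper also leaves implicit.
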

As for the classical Neumann Laplacian, the convexity assumption cannot be removed. Indeed, consider a set $\Omega _{\epsilon }$
consisting of two squares connected by a corridor having width $\epsilon $ in
such a way that the set is symmetric with respect to the $y$-axis. It is easy
to verify that, as $\epsilon $ goes to zero, $\mu _{1}^{odd}(\Omega _{\epsilon })$
goes to zero.

Very recently we found the paper \cite{AN} where the authors, among other things, prove that  if $\Omega$ is a bounded  convex subset of $\R^n$ ($n \ge 2$), then
\begin{equation}
\mu_1(\Omega) \ge \mu_1\left(-\frac{d}{2},\frac{d}{2}\right), \label{andrews-ni}
\end{equation}
where $d$ denotes the diameter of $\Omega$ (see also \cite{Ni}, \cite{AC}). Estimate \eqref{andrews-ni} cannot be compared with ours \eqref{estimate} which involves $\mu_1^{odd}(\Omega)$, the maximum distance $a$ of $\partial\Omega$ from the axis of symmetry, and holds true also for unbounded domains. In a forthcoming paper \cite{BCHT} we will prove that it is possible to ``pass to the limit'' in \eqref{andrews-ni} as $d \to \infty$ and hence prove that  when $\Omega$ is a convex, unbounded domain of $\R^n$ ($n \ge 2$), then
$$
\mu_1(\Omega) \ge \mu_1(\R)=1.
$$
%

\section{The case of bounded domains}
In order to prove Theorem \ref{T1} we first decompose the domain $\Omega$ into convex subdomains, symmetric with respect to the $y$-axis, having small Gaussian measure and width. Then we prove a lower bound for a class of Sturm-Liouville problems, which is somehow a stronger version of the one-dimensioanl analogue to \eqref{estimate}. Finally we use the boundedness of an eigenfunction corresponding to $\mu_1^{odd}(\Omega)$, together with its first and second derivatives, to pass from one to two dimensions.

\begin{proof}[Proof of Theorem \protect\ref{T1}]
\textbf{Step 1: Decomposition in horizontal strips.}  By
approximation arguments (see, for example,  \cite{C} and also \cite{H} p. 35), we may
always assume  $p,q \in C^2(-a,a)$ and that there exist $c_0,c_1>0$ such that
\begin{equation}  \label{p'}
c_0 \le |p^{\prime }(x)|,|q^{\prime }(x)|\le c_1
\end{equation}
for every $x \in (-a,a)$. Let us consider the set of straight-lines parallel
to the $x$-axis and contained in the half-plane $\{y>p(a)\}$; one of them
divides $\Omega^+= \Omega \cap \{y>p(a)\}$ into two convex subdomains with
the same Gaussian measure over each of which $u$ has zero mean value with
respect to $d\gamma_2 $. Repeating this process $n$ times we get
\begin{equation*}
\gamma_2(\Omega_k)= \frac{\gamma_2(\Omega^+)}{2^n}, \quad
\Omega^+=\bigcup_{k=1}^n \Omega_k, \quad \Omega_k \>\mbox{convex}, \quad
\int_{\Omega_k}u d\gamma_2 =0.
\end{equation*}
Clearly, by construction,
\begin{equation*}
\Omega_k=\{(x,y)\in \mathbb{R}^2: \> -a_k\le x \le a_k, d_k \le y \le
p_k(x)\}.
\end{equation*}

\bigskip
\begin{figure}[h]
\centering
\includegraphics[scale=0.6]{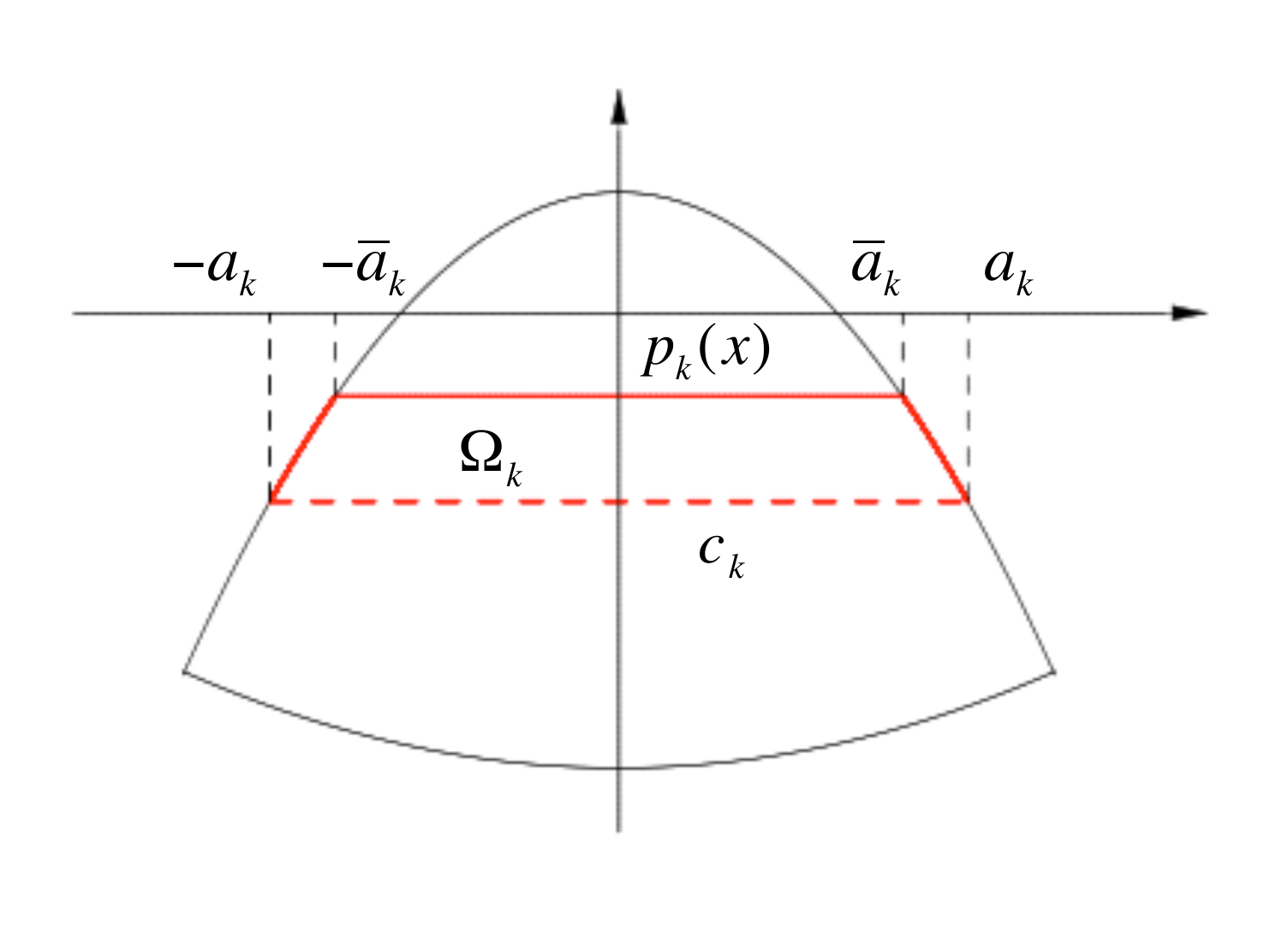} \begin{caption}
{}
\end{caption}
\end{figure}

\noindent Let us fix $\epsilon \in (0,\overline \epsilon)$, where the value
of $\overline \epsilon$ will be specified later; for sufficiently large $n$,
we have that
\begin{equation}  \label{sp}
\gamma_2(\Omega_k) < \epsilon \quad \mbox{and} \quad p_k(x)-d_k < \epsilon,
\quad 1 \le k \le n.
\end{equation}
\textbf{Step 2: A one-dimensional auxiliary problem.} Set
$$\phi_k(x)=\displaystyle\int_{d_k}^{p_k(x)}d\gamma_y;$$
because of the concavity of $p$ and recalling the definition of $p_k$ and $d_k$  in Step 1, we have that $\phi_k(x)$ is a convex function.  Let
\begin{equation}\label{lambda}
\bar \lambda_k=\min\left\{\frac{\dint_{-a_k}^{a_k}(z')^2\phi_kd\gamma_x}{\dint_{-a_k}^{a_k}z^2\phi_kd\gamma_x}:\> z \in C^1(-a_k,a_k), \>\int_{-a_k}^{a_k}z\phi_kd\gamma_x=0\right\}.
\end{equation}
Then a function $v_k$ which realizes the minimum in \eqref{lambda} must satisfy the condition
\begin{equation}\label{zeromean}
\dint_{-a_k}^{a_k}v_k\phi_kd\gamma_x=0
\end{equation}
and the following Sturm-Liouville problem
\begin{equation*}
\left\{
\begin{array}{ll}
-\left[ v_k^{\prime }\phi_k \gamma _{x}\right] ^{\prime }=\bar\lambda_k v_k\phi_k
\gamma _{x} & \mbox{in}\>(-a_k,a_k) \\
&  \\
v_k^{\prime }(-a_k)\phi_k (-a_k)=v_k^{\prime }(a_k)\phi_k (a_k)=0. &
\end{array}
\right.
\end{equation*}
We differentiate with respect to $x$ and introduce the new variable $
w_k=v_k^{\prime}\phi_k^{1/2}$. The function $w_k$ satisfies the following problem
\begin{equation*}
\left\{
\begin{array}{ll}
-w_k^{\prime \prime }+xw_k^{\prime }+w_k\left[ -\frac{1}{2}\frac{\phi_k
^{\prime \prime }}{\phi_k }+\frac{3}{4}\left( \frac{\phi_k ^{\prime }}{
\phi_k }\right) ^{2}-\frac{1}{2}x\frac{\phi_k^{\prime }}{\phi_k }\right]
=(\bar\lambda_k -1)w_k & \mbox{in}\> (-a_k,a_k) \\
&  \\
w(-a_k)=w(a_k)=0. &
\end{array}
\right.
\end{equation*}
Multiplying by $w_k(x)\exp \left( -\frac{x^{2}}{2}\right) $ and integrating
over $(-a_k,a_k)$ we get
\begin{eqnarray*}  \label{w}
I_1+I_2&=&\int_{-a_k}^{a_k}(w_k^{\prime})^2d\gamma _{x} +
\int_{-a_k}^{a_k}\left(\frac{w_k }{\phi_k}\right)^2\left[ -\frac{1}{2}\phi_k
\phi_k ^{\prime \prime } +\frac{3}{4} \left( \phi_k ^{\prime } \right) ^{2}-
\frac{1}{2}x \phi_k\phi_k^{\prime }\right] d\gamma _{x} \\
&=&(\bar\lambda_k -1)\int_{-a_k}^{a_k}w_k^{2}d\gamma _{x}.  \notag
\end{eqnarray*}
By construction, $p_k$ is concave in $(-a_k,a_k)$ and constant in $
(-\overline a_k,\overline a_k)$; then, recalling the definition of $\phi_k$,
we have
\begin{eqnarray*}  \label{q}
I_2&=& \int_{\overline a_k}^{a_k}\left(\frac{w_k }{\phi_k}\right)^2\left[
\phi_k e^{-\frac{p_k^2}{2}}\left(-p_k^{\prime \prime }+p_k(p_k^{\prime })^2
-x p_k^{\prime }e^{-\frac{p_k^2}{2}}\right)+\frac{3}{2}(p_k^{\prime})^2e^{-p_k^2}\right]d\gamma_x \\
&\ge & \int_{\overline a_k}^{a_k}\left(\frac{w_k }{\phi_k}\right)^2\left[
\phi_k e^{-\frac{p_k^2}{2}}\left(p_k(p_k^{\prime})^2-x  p_k^{\prime }e^{-
\frac{p_k^2}{2}}\right)+\frac{3}{2}(p_k^{\prime})^2e^{-p_k^2}\right]d\gamma_x
.  \notag
\end{eqnarray*}
It can be easily seen that there exists a positive constant $L$ independent
of $n$ and $k$ such that
\begin{equation*}
\left|p_k(x)(p_k^{\prime }(x))^2-x p_k^{\prime }(x)e^{-\frac{p_k(x)^2}{2}}\right|
\le L \qquad \forall \> x \in (\overline a_k,a_k);
\end{equation*}
then, choosing $\overline \epsilon =\frac{3}{2}\frac{c_0^2}{L}$, by
\eqref{p'} and \eqref{sp}, the integral $I_2$ is bounded from below by a
positive constant. Therefore
\begin{equation*}
\bar\lambda_k -1\geq \frac{\displaystyle\int_{-a_k}^{a_k} (w_k^{\prime})^2d\gamma
_{x}}{\displaystyle\int_{-a_k}^{a_k}w_k^{2}d\gamma _{x}}\geq \lambda
_{1}(-a_k,a_k),  \label{ge}
\end{equation*}
where $\lambda_1(-a_k,a_k)$ is the first Dirichlet eigenvalue of the
one-dimensional Hermite operator in $(-a_k,a_k)$. Since it can be easily
seen that
\begin{equation}\label{monotone}
\mu_1(-a_k,a_k)=\lambda_1(-a_k,a_k)+1 \ge \lambda_1(-a,a)+1=\mu_1(-a,a),
\end{equation}
we get
\begin{equation}\label{one}
\bar\lambda_k \ge \mu_1(-a,a).
\end{equation}
\textbf{Step 3: Estimates by a dimension reduction process.} Let $u$ be an eigenfunction corresponding to $\mu_1^{odd}(\Omega)$ and let $M>0$ be an
upper bound for the absolute values of $u$ and its first and second
derivatives. By Lagrange theorem we get
\begin{eqnarray}\label{1}
\Big|\int_{\Omega_k} \left(\frac{\partial u}{\partial x}\right)^2& d\gamma_2&
 -  \int_{-a_k}^{a_k}\left(\frac{\partial u}{\partial x}(x,d_k)\right)^2
\phi_k(x) d\gamma_x \Big|
\\
&\le& 2 M^2 \int_{-a_k}^{a_k} d\gamma_x
\left(\int_{d_k}^{p_k(x)}(y-d_k)d\gamma_y\right)\notag
\le 2M^2 \gamma_2(\Omega_k) \epsilon. \notag
\end{eqnarray}
Analogously
\begin{equation}\label{2}
\left| \int_{\Omega_k}u^2 d\gamma_2 -\int_{-a_k}^{a_k}
u(x,d_k)^2\phi_k(x)d\gamma_x \right| \le 2 M^2 \gamma_2(\Omega_k) \epsilon
\end{equation}
and
\begin{equation}\label{3}
\left|\int_{\Omega_k}u d\gamma_2 -\int_{-a_k}^{a_k} u(x,d_k) \phi_k(x)
d\gamma_x \right| \le M \gamma_2(\Omega_k) \epsilon.
\end{equation}
Since the function $u(x,c_k)-\frac{1}{\gamma_2(\Omega_k)}\dint_{-a_k}^{a_k}u(x,c_k)\phi_k(x)d\gamma_x$ satisfies condition \eqref{zeromean}, from \eqref{one}, \eqref{1}, \eqref{2}, \eqref{3} and \eqref{monotone} we deduce
\begin{eqnarray*}
&\displaystyle\int_{\Omega _{k}}&\left\vert \nabla u\right\vert ^{2}d\gamma
_{2} \geq \int_{\Omega _{k}}\left( \frac{\partial u}{\partial x}\right)
^{2}d\gamma _{2} \\
&\geq& \int_{-a_{k}}^{a_{k}}\left( \frac{\partial u}{\partial x}
(x,c_{k})\right) ^{2}\phi _{k}(x)d\gamma_x-2M^{2}\epsilon \gamma _{2}(\Omega
_{k}) \\
&\geq& \mu _{1}(-a_{k},a_{k}) \int_{-a_{k}}^{a_{k}}\left[ u(x,c_{k})-\frac{1
}{\gamma _{2}(\Omega _{k})}\int_{-a_{k}}^{a_{k}}u(x,c_{k})\phi
_{k}(x)d\gamma _{x}\right] ^{2}\phi _{k}(x)d\gamma _{x}-2M^{2}\epsilon
\gamma _{2}(\Omega _{k}) \\
&\ge& \mu _{1}(-a ,a ) \left[ \int_{-a_{k}}^{a_{k}}u(x,c_{k})^{2}\phi
_{k}(x)d\gamma _{x}-\frac{1}{\gamma _{2}(\Omega _{k})}\left(
\int_{-a_{k}}^{a_{k}}u(x,c_{k}) \phi _{k}(x)d\gamma _{x}\right) ^{2}\right]
-2M^{2}\epsilon \gamma _{2}(\Omega _{k}) \\
&\geq&\mu _{1}(-a ,a ) \left[ \int_{\Omega _{k}}u^{2}d\gamma
_{2}-2M^{2}\epsilon \gamma _{2}(\Omega _{k})-\frac{1}{\gamma _{2}(\Omega
_{k})}\left( \int_{\Omega _{k}}ud\gamma _{2}+M\epsilon \gamma _{2}(\Omega
_{k})\right) ^{2}\right] -2M^{2}\epsilon \gamma _{2}(\Omega _{k}) \\
&=& \mu _{1}(-a ,a ) \int_{\Omega _{k}}u^{2}d\gamma _{2}- \mu _{1}(-a,a)
M^2(2+\epsilon)\epsilon \gamma _{2}(\Omega _{k}) -2M^{2}\epsilon
\gamma_{2}(\Omega _{k}).
\end{eqnarray*}
Summing over $k$ we get
\begin{equation*}
\int_{\Omega^+ }\left\vert \nabla u\right\vert ^{2}d\gamma _{2}\geq \mu
_{1}(-a,a) \int_{\Omega^+ }u^{2}d\gamma _{2}- \mu _{1}(-a,a)
M^2(2+\epsilon)\epsilon \gamma _{2}(\Omega^+ ) -2M^{2}\epsilon
\gamma_{2}(\Omega^+ ).
\end{equation*}
Finally as $\epsilon $ goes to $0^{+}$ we deduce
\begin{equation*}
\int_{\Omega^+ }\left\vert \nabla u\right\vert ^{2}d\gamma _{2}\geq \mu
_{1}(-a,a) \int_{\Omega^+ }u^{2}d\gamma _{2}.
\end{equation*}
Finally we can repeat the same arguments for $\Omega^-=\Omega \cap \{y
<q(a)\}$ after a reflection about the $x$-axis, obtaining
\begin{equation*}
\int_{\Omega^- }\left\vert \nabla u\right\vert ^{2}d\gamma _{2}\geq \mu
_{1}(-a,a) \int_{\Omega^- }u^{2}d\gamma _{2}
\end{equation*}
and the thesis follows.
\end{proof}

\section{The case of unbounded domains}
The arguments contained in the proof of Theorem \ref{T1} cannot be used to treat the case of unbounded domains, since in general an eigenfunction corresponding to $\mu_1^{odd}$ is not bounded. Consider, for instance, $\Omega=\{ (x,y) \in \R^2:  y<0\}$; then $\mu_1^{odd}(\Omega)=\mu_1(\Omega)=1$ and a corresponding eigenfunction is $u(x,y)=x$. To overcome this difficulty  we will
consider a sequence of bounded sets $\Omega_n$ satisfying the assumptions of
Theorem \ref{T1}, invading  $\Omega$ (see Figure 2) and then we pass to the limit in the eigenvalue estimate. A key point here is an extension result provided in Step 2  that could be of interest by its own. We cannot use the results already available in literature (see, for instance, \cite{C}, \cite{FP}), since we need a uniformly bounded sequence of extension operators $P_{\Omega_n}: H^1(\Omega_n,d\gamma_2) \to H^1(\R^2,d\gamma_2)$.
\bigskip
\begin{figure}[h]
\centering
\includegraphics[scale=0.6]{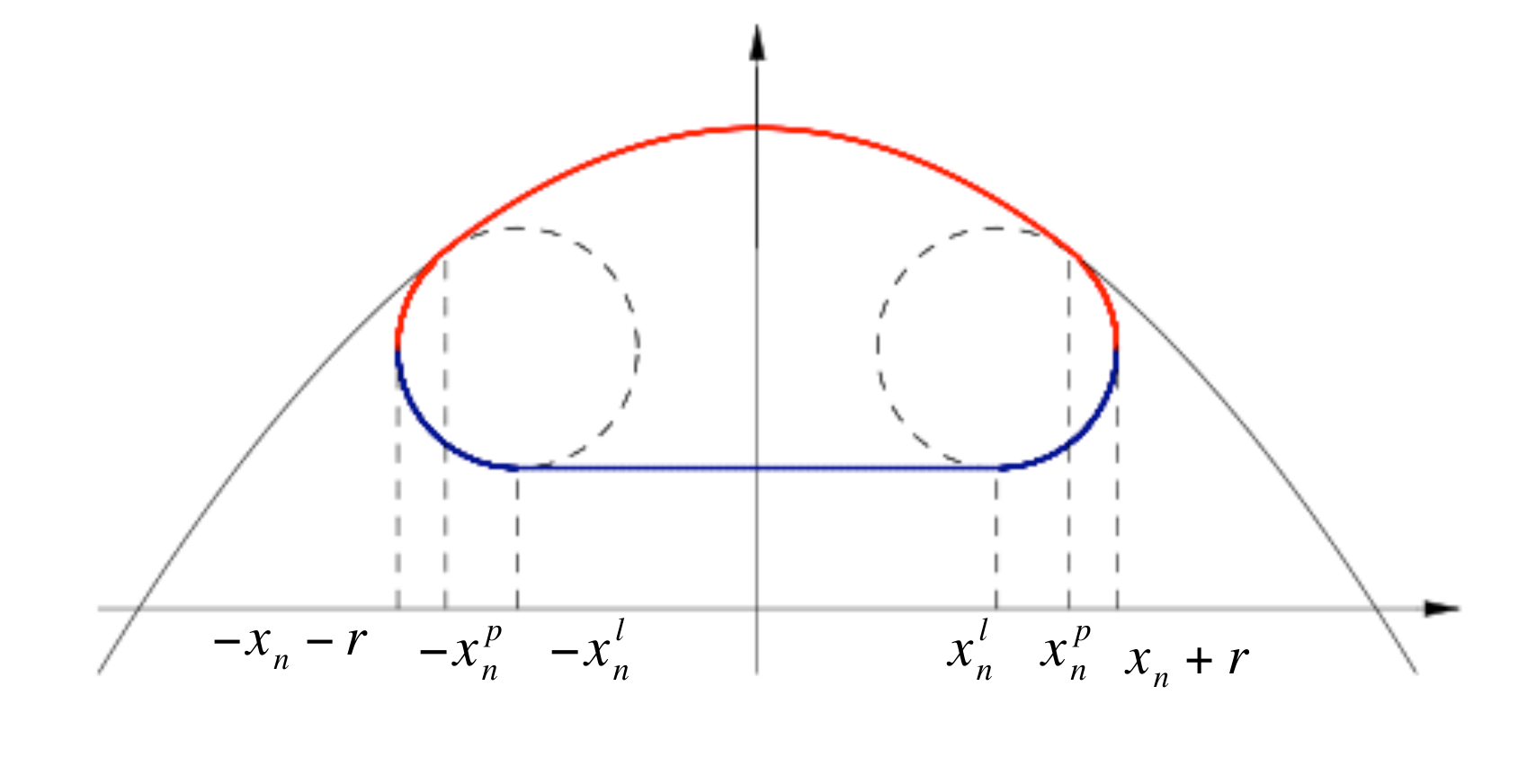} \begin{caption}
{}
\end{caption}
\end{figure}

\begin{proof}[Proof of Theorem \protect\ref{T2}]
\textbf{Step 1: A sequence of domains $\Omega_n$ invading $\Omega$.}

We distinguish two cases:

$\begin{array}{ll}
a)& \displaystyle\lim_{x \to a^-} p(x)=-\infty; \\
b)&\displaystyle\lim_{x \to a^-}p(x) \in \R.
\end{array}
$

Consider first case $a)$.
Let
\begin{equation*}
G_n=\{(x,y)\in\Omega: \> y > -n\}, \quad n\ge \tilde n=[-p(0)]+1,
\end{equation*}
where $[x]$ stands for the integer part of the real number $x$.

Note that $G_n \ne \emptyset$ for every $n \ge \tilde n$. In order to remove
the wedges at the bottom of $G_n$, we consider, for every $n \ge \tilde n$,
two equal disks $D_n^\pm$ of radius $r$ (whose value, independent of $n$,
will be specified later) centered at $(\pm x_n,-n+r)$, contained in $G_n$,
tangent both to $\partial\Omega$ and to $\{y=-n\}$ at the points $(\pm
x^p_n,p(x_n^p))$ and $(\pm x_n^l,-n)$, respectively. Finally, let
\begin{equation*}
p_n(x)=\left\{
\begin{array}{ll}
p(x) & -x_n^l \le x \le x_n^p \\
-n+r+\sqrt{r^2-(x+x_n)^2} & -x_n-r<x<-x_n^p \\
-n+r+\sqrt{r^2-(x-x_n)^2} & x_n^p<x<x_n+r
\end{array}
\right.
\end{equation*}
and
\begin{equation*}
q_n(x)=\left\{
\begin{array}{ll}
-n & -x_n^p \le x \le x_n^l \\
-n+r-\sqrt{r^2-(x+x_n)^2} & -x_n-r<x<-x_n^l \\
-n+r-\sqrt{r^2-(x-x_n)^2} & x_n^l<x<x_n+r.
\end{array}
\right.
\end{equation*}
Set
\begin{equation*}
\Omega_n=\{(x,y)\in \Omega:\> -x_n-r<x<x_n+r,\> q_n(x)<y<p_n(x)\}
\end{equation*}
(see Figure 2). Clearly $\Omega_n$ is a sequence of bounded, smooth, nested
sets whose union coincides with $\Omega$.
Since   $\Omega$
satisfies a uniform interior sphere condition we may choose
\begin{equation*}  \label{r}
r=\tilde r=\frac{1}{2} \inf \displaystyle{\frac {1}{|k(x,y)|}}>0,
\end{equation*}
where $k(x,y)$ stands for the curvature of $\partial \Omega$ at a generic
point $(x,y).$ Let $\overline \Omega_n=\Omega_n + B(0, \tilde r)$ be the exterior
parallel set of $\Omega_n$ relative to $B(0,\tilde r)$, that is the union of all closed disks of radius $\tilde r$ whose centres lie in $\Omega_n$. Denote by $\underline \Omega_n$
 the interior parallel set of $\Omega_n$ relative to $B(0,\tilde r)$, that is the
union of the centres of all disks with radius $\tilde r$ lying entirely in $\Omega_n $.

\textbf{Step 2: A uniformly bounded sequence of extension operators from $H^1(\Omega_n,d\gamma_2)$ onto $H^1(\R^2,d\gamma_2)$.}

Let $u_n$ be an eigenfunction corresponding to $\mu_1^{odd}(\Omega_n)$. We want to extend $u_n$ to $\overline \Omega_n$ by reflection along the normal
to $\partial \Omega_n$. Let $\Phi_n: (x,y) \in
\Omega_n\setminus\underline\Omega_n \to (x_e,y_e) \in \overline \Omega_n
\setminus \Omega_n$ be such a reflection. $\Phi_n$ is a one-to-one map and,
denoted by
$$x_m=\frac{x+x_e}{2},$$
 it holds
\begin{equation*}
J_{\Phi_n}(x,y)= \frac{\partial(x_e,y_e)}{\partial(x,y)}= \left\{
\begin{array}{ll}
\dfrac{1+(p_n^{\prime }(x_m))^2+p_n^{\prime \prime }(x_m)(y-p_n(x_m))}{
-1-(p_n^{\prime }(x_m))^2+p_n^{\prime \prime }(x_m)(y-p_n(x_m))}, \quad &
y\ge -n+\tilde r \\
&  \\
\dfrac{1+(q_n^{\prime }(x_m))^2+q_n^{\prime \prime }(x_m)(y-q_n(x_m))}{
-1-(q_n^{\prime }(x_m))^2+q_n^{\prime \prime }(x_m)(y-q_n(x_m))}, & y<-n+\tilde r.
\end{array}
\right.
\end{equation*}
We explicitly observe that the quantities $p_n^{\prime \prime
}(x_m)(y-p_n(x_m))$ and $q_n^{\prime \prime }(x_m)(y-q_n(x_m))$ are
nonnegative; so it is easy to verify that
$$|J_{\Phi_n}| \ge 1 \quad \mathrm{in} \> \underline \Omega_n.$$
 On the other hand, if $y \ge -n+\tilde r$,
 $$p_n(x_m)-y=\frac{\overline r}{\sqrt{1+(p_n^{\prime }(x_m))^2}},$$
  where
  \begin{equation}\label{r}
  \overline r=\mathrm{dist}\left((x,y),\partial\Omega\right) \in [0,\tilde r].
  \end{equation}
Then, from \eqref{r} we
deduce that
\begin{equation*}
0 \le p_n^{\prime \prime }(x_m)(y-p_n(x_m)) \le \frac{1}{2}(1+(p_n^{\prime
}(x_m))^2).
\end{equation*}
Thus $|J_{\Phi_n}|\le 3$ whenever $y \ge -n+\tilde r$.

\noindent Analogously one can treat
the case $y<-n+\tilde r$ obtaining
\begin{equation}  \label{jac}
1 \le| J_{\Phi_n}(x,y)| \le 3, \quad \forall (x,y)\in \underline\Omega_n.
\end{equation}
Define
\begin{equation}  \label{cris}
\overline u_n(x_e,y_e) = u_n(\Phi_n^{-1}(x_e,y_e)) \quad \forall
\>(x_e,y_e)\in \overline\Omega_n \setminus \Omega_n.
\end{equation}
If $(x_e,y_e)=\Phi_n(x,y)$ with $y \ge -n+\tilde r$, we have
\begin{eqnarray*}
\frac{\partial \overline u_n(x_e,y_e)}{\partial x_e}&=& \frac{\partial
u_n(x,y)}{\partial x}\left(\frac{2}{1+(p_n^{\prime }(x_m))^2+p_n^{\prime
\prime }(x_m)(y-p_n(x_m))}-1\right) \\
&&+\frac{\partial u_n(x,y)}{\partial y}\frac{2p_n^{\prime }(x_m)}{
1+(p_n^{\prime }(x_m))^2+p_n^{\prime \prime }(x_m)(y-p_n(x_m))} \\
\frac{\partial \overline u_n(x_e,y_e)}{\partial y_e}&=& \frac{\partial
u_n(x,y)}{\partial x}\frac{2p_n^{\prime }(x_m)}{1+(p_n^{\prime
}(x_m))^2+p_n^{\prime \prime }(x_m)(y-p_n(x_m))} \\
&&+\frac{\partial u_n(x,y)}{\partial y}\left(\frac{2p_n^{\prime }(x_m)^2}{
1+(p_n^{\prime }(x_m))^2+p_n^{\prime \prime }(x_m)(y-p_n(x_m))}-1\right).
\end{eqnarray*}
Analogous equalities hold whenever $y<-n+\tilde r$; then we can easily deduce that
\begin{equation}  \label{energy}
\left( \frac{\partial \overline u_n(x_e,y_e)}{\partial x_e}\right)^2 + \left(
\frac{\partial \overline u_n(x_e,y_e)}{\partial y_e}\right)^2 \le 2 \left[
\left(\frac{\partial u_n(x,y)}{\partial x}\right)^2+\left(\frac{\partial
u_n(x,y)}{\partial y}\right)^2\right].
\end{equation}
Now let $\theta_n \in C_0^\infty(\mathbb{R}^2)$ be such that $0 \le \theta_n
\le 1$ in $\mathbb{R}^2$, $\theta_n=1$ on $\Omega_n$, $\theta_n=0$ in $
\mathbb{R}^2\setminus \overline \Omega_n$ and $|D\theta_n| \le C$, with $C$
independent of $n$ and dependent only on $r$. Set
\begin{equation*}
\tilde u_n=\theta_n \overline u_n.
\end{equation*}
To go on we claim that there exists a positive constant $C$, independent of $n$, such that
\begin{equation}  \label{exp}
\exp\left(\frac{-||\Phi_n (x,y)||^2}{2}+\frac{x^2+y^2}{2}\right) \le C \quad \forall \> (x,y) \in \underline\Omega_n.
\end{equation}
 Indeed a straightforward computation yields that, if $y \ge -n+\tilde r$,
 $$
 -||\Phi_n(x,y)||^2+x^2+y^2 = \frac{4\bar r}{\sqrt{1+p_n'(x_m)^2}}(x_mp_n'(x_m)-p_n(x_m)),
 $$
where $\overline r$ is defined in \eqref{r}. The concavity of $p_n$ in $[-x_n-\tilde r,x_n+\tilde r]$ ensures that
$$
x_mp_n'(x_m)-p_n(x_m) \le -p_n(0).
$$
Hence, for $ y \ge -n+\tilde r$, we have
$$
\exp\left(\frac{-||\Phi_n (x,y)||^2}{2}+\frac{x^2+y^2}{2}\right) \le \exp\left(-\frac{2\overline r p_n(0)}{\sqrt{1+p_n'(x_m)^2}}\right)
\le \max\{1,e^{-2rp(0)}\}.
$$
Analogously, when $y<-n+\tilde r$, since, without loss of generality we may assume $-q_n(0)>0$, we get
$$
\exp\left(\frac{-||\Phi_n (x,y)||^2}{2}+\frac{x^2+y^2}{2}\right) \le 1
$$
and the claim \eqref{exp} follows.

Finally, by \eqref{jac}, \eqref{cris} and \eqref{exp} we get
\begin{eqnarray}\label{u2}
&\dint_{\mathbb{R}^2}& \tilde u_n^2 d\gamma_2
\\
&=& \int_{\Omega_n}u_n^2d\gamma_2
+ \int_{\overline \Omega_n \setminus \Omega_n} \tilde u_n^2 d\gamma_2 \notag\\
&= & \int_{\Omega_n}u_n^2d\gamma_2  + \int_{\Omega_n \setminus \underline
\Omega_n} \theta_n^2(\Phi_n(x,y))  u_n^2(x,y) \exp\left(-\frac{
||\Phi_n (x,y)||^2}{2}+\frac{x^2+y^2}{2}\right) |J_{\Phi_n}|d\gamma_2 \notag \\
&\le& C \int_{\Omega_n}u_n^2d\gamma_2,\notag
\end{eqnarray}
while \eqref{jac}, \eqref{energy} and \eqref{u2} imply
\begin{eqnarray*}
&\dint_{\mathbb{R}^2}& |D\tilde u_n|^2 d\gamma_2 \\
&\le& C\int_{\Omega_n}u_n^2d\gamma_2+\int_{\Omega_n}|D
u_n|^2d\gamma_2 + C \int_{\Omega_n} |D u_n|^2 \exp\left(-\frac{||\Phi_n
(x,y)||^2}{2}+\frac{x^2+y^2}{2}\right) |J_{\Phi_n}|d\gamma_2 \\
&\le & C \left[\int_{\Omega_n}u_n^2d\gamma_2+ \int_{\Omega_n} |Du_n|^2d\gamma_2\right].
\end{eqnarray*}
We have proved that
\begin{equation}\label{ext}
||\tilde u_n||_{H^1(\R^2,d\gamma_2)} \le C || u_n||_{H^1(\Omega_n,d\gamma_2)}
\end{equation}
with $C$ dependent only on $\tilde r$.

\begin{remark}
Note that in Step 2 we do not use the symmetry assumption but just the fact that $\partial\Omega$ does intersect the $y$-axis. Therefore, by repeating the same arguments used in Step 2, we can prove that, if $\Omega$ is a convex, $C^2$ domain, satisfying a uniform interior sphere condition such that $\partial\Omega \cap \{x=0\}\neq \emptyset$, then there exists an extension operator $P:H^1(\Omega,d\gamma_2) \to H^1(\R^2,d\gamma_2)$ such that
\begin{equation} \label{ultima}
||Pu||_{H^1(\R^2,d\gamma_2)} \leq C ||u||_{H^1(\Omega,d\gamma_2)}
\end{equation}
with $C$ depending only on the radius of the interior sphere.
 In turn, as done for example in \cite{FP}, from \eqref{ultima} we can derive the compactness of the embedding of $H^1(\Omega,d\gamma_2)$ into $L^2(\Omega,d\gamma_2)$.
\end{remark}

\textbf{Step 3: Passing to the limit.}

Observe that the sequence $\mu_1^{odd}(\Omega_n)$ is bounded from above and
from below by two positive constants. Indeed, using \eqref{estimate}, we
have
\begin{equation}  \label{mu}
1=\mu_1(-\infty,+\infty) \le \mu_1(-x_n-\tilde r,x_n+\tilde r) \le \mu_1^{odd}(\Omega_n) \le \dfrac{
\gamma_2(\Omega_n)}{\displaystyle\int_{\Omega_n} x^2 d\gamma_2} \le \frac{
\gamma_2(\Omega_1)}{\displaystyle\int_{\Omega_1} x^2 d\gamma_2}.
\end{equation}
Thus, up to a subsequence, $\mu_1^{odd}(\Omega_n)$ converge to a number $
\mu>0$.

\noindent Now, let us normalize $u_n$  in such a way that $u_n(x,y)>0$
if $x>0$ and $\displaystyle\int_{\Omega_n}u_n^2d\gamma_2=1$ for every $n$.
 From \eqref{mu} and \eqref{ext} we deduce that the sequence $\tilde u_n$ is
bounded in $H^1(\mathbb{R}^2,d\gamma_2)$. Then, up to a subsequence, $\tilde
u_n$ converges to a function $v$ weakly in $H^1(\Omega,d\gamma_2)$, strongly
in $L^2(\Omega,d\gamma_2)$ and a.e. in $\Omega$. Let $\varphi \in
H^1(\Omega,d\gamma_2)$; then
\begin{equation*}
\int_{\Omega} D\tilde u_n D\varphi d\gamma_2 =\int_{\Omega_n}Du_n D \varphi
d\gamma_2 +\int_{\Omega\setminus\Omega_n} D\tilde u_n D\varphi d\gamma_2
\end{equation*}
and
\begin{equation*}
\int_{\Omega} \tilde u_n \varphi d\gamma_2 =\int_{\Omega_n} u_n \varphi
d\gamma_2 +\int_{\Omega\setminus\Omega_n} \tilde u_n \varphi d\gamma_2.
\end{equation*}
Since
\begin{equation*}
\int_{\Omega\setminus \Omega_n} D\tilde u_n D\varphi d\gamma_2 \le
\left(\int_{\mathbb{R}^2}|D\tilde
u_n|^2d\gamma_2\right)^{1/2}\left(\int_{\Omega\setminus \Omega_n}|D\varphi
|^2d\gamma_2\right)^{1/2}\le C \left(\int_{\Omega\setminus
\Omega_n}|D\varphi |^2d\gamma_2\right)^{1/2}
\end{equation*}
and $\lim_{n \to \infty}\gamma_2(\Omega \setminus \Omega_n)=0$, we have
\begin{equation*}
\lim_{n \to \infty} \int_{\Omega\setminus\Omega_n} D\tilde u_n D\varphi
d\gamma_2=0.
\end{equation*}
Analogously, it holds
\begin{equation*}
\lim_{n \to \infty} \int_{\Omega\setminus\Omega_n} \tilde u_n \varphi
d\gamma_2=0.
\end{equation*}
Then, up to subsequences,
\begin{eqnarray*}
\int_{\Omega} Dv D\varphi &=& \lim_{n \to \infty}\int_{\Omega} D \tilde u_n
D\varphi d\gamma_2 = \lim_{n \to \infty} \int_{\Omega_n}Du_n D \varphi
d\gamma_2 \\
&=& \lim_{n \to \infty} \mu_1^{odd} (\Omega_n) \int_{\Omega_n} u_n \varphi
d\gamma_2 = \mu \lim_{n \to \infty}\int_{\Omega} \tilde u_n \varphi
d\gamma_2 = \mu \int_{\Omega} v \varphi d\gamma_2.
\end{eqnarray*}
On the other hand, $v$ inherits the sign of $u_n$, more precisely $v>0$ if $
x>0$ and $v<0$ if $x<0$. Then $v$ is an eigenfunction corresponding to $
\mu_1^{odd}(\Omega)$. Finally, by applying \eqref{mu}, we have
\begin{equation*}
\mu_1^{odd}(\Omega)=\lim_{n \to \infty} \mu_1^{odd}(\Omega_n) \ge
\mu_1(-a,a)
\end{equation*}
and hence case $a)$  is proved.

The proof of case $b)$ is much more simple, since the boundary of $\Omega$ contains two parallel half-lines. Thus the reflection map through these two straight lines has obviously jacobian 1.
\end{proof}

Finally, let  $S_a=\{(x,y)\in \mathbb{R}^2:\> |x|<a\}$. We observe that
$$
\mu_1^{odd}(S_a)=\mu_1(-a,a)=1+\lambda_1(-a,a)=\mu_1(S_a)+\lambda_1(-a,a).
$$
The following proposition is an immediate consequence of estimate \eqref{estimate}.\begin{proposition}
If $\Omega$ satisfies assumptions of Theorem \ref{T1} or \ref{T2} and $\mu_1(\Omega)=\mu_1^{odd}(\Omega)$, then
$$
\mu_1(\Omega)-\mu_1(S_a) =\mu_1(\Omega)-1\ge \lambda_1(-a,a).
$$
\end{proposition}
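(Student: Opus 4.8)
The plan is to recognize that this proposition is a direct chaining of two facts already in hand — the sharp estimate \eqref{estimate} and the explicit spectral identity for the vertical strip $S_a$ recorded immediately above the statement. No fresh analysis is required; the work is entirely in assembling the inequalities in the correct order.

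First I would use the hypothesis $\mu_1(\Omega)=\mu_1^{odd}(\Omega)$ to replace the full first nontrivial eigenvalue by its odd counterpart. Since $\Omega$ meets the assumptions of Theorem \ref{T1} or of Theorem \ref{T2}, estimate \eqref{estimate} applies and yields $\mu_1^{odd}(\Omega)\ge \mu_1(-a,a)$. Combining these two facts gives
\[
\mu_1(\Omega)=\mu_1^{odd}(\Omega)\ge \mu_1(-a,a).
\]

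Next I would invoke the identity displayed just before the statement, namely $\mu_1(S_a)=1$ and $\mu_1(-a,a)=1+\lambda_1(-a,a)$, which comes from separating variables for the Hermite operator on the product strip $(-a,a)\times\mathbb{R}$: the $y$-factor contributes its first eigenvalue $1$ and the $x$-factor contributes $\mu_1(-a,a)$. Substituting $\mu_1(-a,a)=1+\lambda_1(-a,a)$ into the previous display produces
\[
\mu_1(\Omega)\ge 1+\lambda_1(-a,a),
\]
that is $\mu_1(\Omega)-1\ge \lambda_1(-a,a)$. Because $\mu_1(S_a)=1$, this is precisely $\mu_1(\Omega)-\mu_1(S_a)=\mu_1(\Omega)-1\ge \lambda_1(-a,a)$, which is the assertion.

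The only point deserving any care — and it is already settled in the line preceding the proposition — is the computation $\mu_1(S_a)=1$. One must verify that in the separated spectrum $\{\mu_i(-a,a)+\nu_j\}$ of the strip the minimal nontrivial value is $1$ rather than $\mu_1(-a,a)$; this holds because $\lambda_1(-a,a)>0$ forces $\mu_1(-a,a)=1+\lambda_1(-a,a)>1$, so the smallest positive eigenvalue is the one coming from the $y$-direction. Granting that, there is no genuine obstacle, and the proposition follows immediately from \eqref{estimate}.
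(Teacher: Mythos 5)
Your proposal is correct and follows exactly the paper's (implicit) argument: the proposition is stated there as an immediate consequence of \eqref{estimate} combined with the identity $\mu_1(-a,a)=1+\lambda_1(-a,a)=\mu_1(S_a)+\lambda_1(-a,a)$ recorded just before it. Your extra remark justifying $\mu_1(S_a)=1$ (since $\lambda_1(-a,a)>0$ forces the smallest nontrivial separated eigenvalue to come from the $y$-direction) is a sound elaboration of the same computation.
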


\begin{remark}
 As already observed in the Introduction, the equality
 \begin{equation}\label{=}
 \mu_1(\Omega)=\mu_1^{odd}(\Omega)
 \end{equation}
 holds for instance when $\Omega$ is any disk centred at the origin or any square $(-l,l)^2$ ($l>0$).

\noindent Anyway, all the assumptions of Theorem \ref{T1} or \ref{T2} are not enough to guarantee
\eqref{=} to hold. Indeed, denoted by $T=(-1,1)\times (-\infty ,0)$, it is
easy to verify that $2=\mu _{1}(T)<\mu _{1}^{odd}(T)=3$ with corresponding
eigenfunctions $u(x,y)=y^{2}-1$ and $u^{odd}(x,y)=x^{3}-3x$.
\end{remark}
{\sc Acknowledgment.} We would like  to thank professor A. Henrot for useful discussions.

\end{document}